\documentclass[12pt]{amsart}
\usepackage{times,amssymb, amsthm}
\usepackage{enumerate,comment}
\usepackage{tikz}
\usepackage{cite}
\usetikzlibrary{positioning}
\usepackage[T1]{fontenc}
\usepackage[utf8]{inputenc}

\parskip=\smallskipamount

\newtheorem*{Thm*}{Theorem}
\newtheorem{Thm}{Theorem}

\newtheorem{Lemma}[Thm]{Lemma}

\theoremstyle{definition}

\newtheorem{Remark}[Thm]{Remark}

\title{Existence of Solutions of the third term of the Connaughton-Newell Model with a source term}
\author{Anh Vi\d{\^{e}}t Nguy\~{\^e}n}
\email{navgoingcollege2023@tamu.edu}

\begin{document}
\maketitle
\begin{abstract}
The Connaughton-Newell equation is an approximation of three-wave kinetic equations using a fully non-linear coagulation-fragmentation model. This equation consists of three non-linear operators. In this paper, we proved that assuming a constant interaction kernel and a well-behaved source term, the third operator of the Connaughton-Newell equation has a solution.
\end{abstract}
\begin{section} {Introduction}
The theory of wave turbulence provides a fundamental framework for understanding a wide range of physical and mechanical phenomena. Some notable examples are waves due to rotation, Alfv\'en wave turbulence in solar wind, and waves in plasma fusion devices. the theory of wave turbulence, first formulated by Peiers\'\ \cite{1},  was further developed by Benney \& Saffman \cite{2}, Zakharov \& Filonenko \cite{3}, Benney \& Newell \cite{4}, and most notably, Hasselmann \cite{5,6}. These works led to the formulation of the three- and four-wave kinetic equations, which describe the energy distribution among weakly interacting waves. The three-wave kinetic equation is written

\[
\partial_tf(t,p):= \int \int_{\mathbb{R}^{2d}}[\mathrm{R}_{p,p_1,p_2}[f] - \mathrm{R}_{p_1,p,p_2}[f] - \mathrm{R}_{p_2,p,p_1}[f]] d p_1 dp_2, \quad f(0,p)=f_0(p),
\]

where, $\mathrm{R}_{p,p_1,p_2}[f]$ is defined as

\[
\mathrm{R}_{p,p_1,p_2}[f] := |V_{p,p_1,p_2}|^2 \delta (p-p_1-p_2) \delta (\omega (p)- \omega(p_1) - \omega(p_2))(f(p_1)f(p_2)-ff(p_1)-ff(p_2)).
\]

\vspace{0.5cm}

 $f(t,p)$ represents the wave density at wave number $p\in \mathbb{R}^d$ with initial condition $f_0(p)$, and $\omega(p)$ represents the dispersion relation of the waves. For further understanding, we direct readers to the comprehensive work in \cite{7,8,9}.\\

In a closely related setting, the coagulation and fragmentation kinetic equations also appear in a variety of physical and mechanical applications. These include: the formation of aerosols, polymers, celestial bodies on an astronomical scale, and colloidal aggregates\cite{10,11,12,13,14,15,16,17,18,19,20,21}. These equations describe the coagulating or fragmentation behavior of a system of particles under some interaction or external force. For the pure coagulation model, the Smoluchowski equation, where Well-Posedness was shown by Shirvani and Roessel \cite{22}, serves as a classical model for particulate processes. In the study of these equations, the main focus is on the particle density function, usually denoted as $n_\omega(t)$ where $\omega \in \mathbb{N}$ denotes the size of particles at time $t\in [0,+\infty)$.

Surprisingly, there is a conceptual similarity between the three-wave kinetic equations and the coagulation-fragmentation kinetic equations. In particular, the transfer of energy between scales in wave turbulence is similar to the transfer of mass between clusters in the coagulation process. In 2010, Connaughtin and Newell proposed an approximation of the three-wave kinetic equations using a non-linear coagulation-fragmentation model \cite{23}. A numerical scheme for this model was provided by Das and Tran \cite{24}, and its analysis was provided by Wang and Tran \cite{25}. The Connaughton-Newell equation expresses both the coagulation and fragmentation terms in a nonlinear manner. The equation read\\
\[
\frac{\partial N_\omega}{\partial t}:=\mathcal{Q}[N_\omega](t),\ \omega\in \mathbb{R^+}, \  N_\omega(0) = N_\omega^{in}
,\]
\\
where the operator$\mathcal{Q}[N_\omega](t)$ is written as

\[
\mathcal{Q}[N_\omega](t) := S_1[N_\omega] + S_2[N_\omega] + S_3[N_\omega],
\]

and

\[
    S_1(N_\omega):=\int_0^\omega K_1(\omega-\mu, \mu) N_\mu N_{\omega-\mu}d\mu -  \int_\omega^\infty K_1(\mu-\omega, \omega) N_\omega N_{\mu-\omega} d\mu- \int_0^\infty K_1(\omega, \mu) N_\omega N_\mu d\mu,
\]

\[
    S_2(N_\omega):=-\int_0^\omega K_2(\mu, \omega-\mu) N_\omega N_\mu d\mu +  \int_\omega^\infty K_2(\omega, \mu-\omega) N_\mu N_{\mu-\omega} d\mu+ \int_0^\infty K_2(\omega, \mu) N_{\omega+\mu} N_\mu d\mu,
\]

\[
    S_3(N_\omega):=-\int_0^\omega K_3(\mu, \omega-\mu) N_\omega N_{\omega-\mu}d\mu +  \int_\omega^\infty K_3(\mu, \mu-\omega) N_\omega N_\mu d\mu+ \int_0^\infty K_3(\mu, \omega) N_{\omega+\mu} N_\mu d\mu.
\]

Here, $N_\mu$ represents the wave frequency spectrum such that $\int_{\mu_1}^{\mu_2}N_\mu d\mu$ is the total wave action in the frequency band $[\mu_1, \mu_2]$. The homogeneous function $K_i(\omega, \mu)$ represents the wave interaction kernels where $K_1$ is associated with the transfer of energy as $K_2$ and $K_3$ are associated with the back-scattering of energy.\\

In this paper, we show that there is a solution to the equation $\frac{\partial N_\omega}{\partial t} = S_3(N_\omega)$ with some additional assumptions. In particular, we look at $S_3(N_\omega)$ with a non-trivial source term, denoted $g_\omega(t)$, and a constant interaction kernel, denoted $K_3(\omega, \mu)$, which will be regarded as 1. We consider the equation\\
\begin{equation} \label{eq1}
\begin{split}
    \frac{\partial N_\omega}{\partial t} &=-\int_0^\omega N_\omega N_{\omega-\mu}d\mu + \int_\omega^\infty N_\omega N_\mu d\mu+ \int_0^\infty N_{\omega+\mu} N_\mu d\mu +g_\omega(t).\\
\end{split}
\end{equation}\\
Equation ~\eqref{eq1} can be rewritten as\\
\begin{equation}
\label{eq2}
\begin{split}
   \frac{\partial N_\omega}{\partial t}= N_\omega \int_0^\infty N_\mu d\mu+ \int_0^\infty N_{\omega+\mu} N_\mu d\mu+g_\omega(t).\\
\end{split}
\end{equation}

To account for physical constraints, we impose the following assumptions:

\begin{itemize}
\item[(A1)] For each $\omega \ge 0$, the source term
\[
g_\omega : [0,T] \to \mathbb{R}
\]
belongs to $L^1_{\mathrm{loc}}[0,T]$.\\

\item[(A2)] The initial value for $N_\omega$ and the source term, $g_\omega$, are nonnegative; that is
\[
N_\omega(0) \ge 0, \ \text{ for all }  \omega \ge 0 \quad \text{and}
\quad
g_\omega(t) \ge 0, \ \text{ for all }  (t,\omega) \in [0,T]\times(0,\infty).\\
\]

\item[(A3)] The initial value is integrable with respect to $\omega$
\[
\int_0^\infty N_\omega(0)\, d\omega < \infty.\\
\]

\item[(A4)] The total source term is locally integrable in time
\[
\int_0^\infty g_\omega(t)\, d\omega \in L^1_{\mathrm{loc}}[0,T].
\]
\end{itemize}

With these assumptions, we have the following result.

\begin{Thm}
Assuming that \emph{(A1)}--\emph{(A4)} hold, then equation \eqref{eq2} admits at least one solution.
\end{Thm}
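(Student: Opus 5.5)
The plan is to build a nonnegative solution of \eqref{eq2} on the time interval $[0,T]$ of (A1)--(A4) by a monotone iteration. The right-hand side of \eqref{eq2} is a sum of nonnegative, order-preserving terms whenever $N\ge 0$ and $g\ge 0$. We write \eqref{eq2} in integrated form
\[
N_\omega(t)=N_\omega(0)+\int_0^t\Big(N_\omega(s)M(s)+\int_0^\infty N_{\omega+\mu}(s)N_\mu(s)\,d\mu+g_\omega(s)\Big)ds,\qquad M(s):=\int_0^\infty N_\mu(s)\,d\mu ,
\]
and take a solution to mean a nonnegative function with $M\in L^\infty(0,T)$ satisfying this identity for a.e.\ $\omega$ and all $t\in[0,T]$. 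Set $N^{(0)}_\omega(t):=N_\omega(0)+\int_0^t g_\omega(s)\,ds$. Define $N^{(k+1)}$ by inserting $N^{(k)}$ into the quadratic terms on the right-hand side. By (A1), (A2) and (A4), $N^{(0)}$ is nonnegative and measurable, and $\int_0^\infty N^{(0)}_\omega(t)\,d\omega\le M_0+\int_0^t G$, where $M_0:=\int_0^\infty N_\omega(0)\,d\omega$ and $G(s):=\int_0^\infty g_\omega(s)\,d\omega$. Since the map $N\mapsto$ right-hand side is monotone on nonnegative functions, induction gives $0\le N^{(0)}\le N^{(1)}\le\cdots$ pointwise in $(t,\omega)$.

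The second step is a uniform bound on $M^{(k)}(t):=\int_0^\infty N^{(k)}_\omega(t)\,d\omega$. Integrate the iteration in $\omega$ and use Tonelli together with $\int_0^\infty N_{\omega+\mu}\,d\omega\le M$ to bound the gain term by $M^2$. This gives
\[
M^{(k+1)}(t)\le M_0+\int_0^t G(s)\,ds+2\int_0^t \big(M^{(k)}(s)\big)^2\,ds .
\]
Let $\Phi$ solve $\Phi'=2\Phi^2+G$ with $\Phi(0)=M_0$. By induction, $M^{(k)}\le\Phi$ on every interval where $\Phi$ stays finite. This uniform bound is the heart of the argument. It holds on $[0,T]$ provided the comparison function $\Phi$ does not blow up there, which I take to be the content of the standing hypotheses on $T$ through (A3)--(A4). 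Explicitly, it suffices that $2T\big(M_0+\int_0^T G\big)<1$. This is the step I expect to be the main obstacle.

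For the general case I would first try to use the exact mass identity, where the gain term is $\int\!\!\int N_{\omega+\mu}N_\mu$ rather than $M^2$, to sharpen the bound. If that fails, I would read the theorem's $[0,T]$ as the horizon on which the data keep $\Phi$ finite. I note that the crude inequality above cannot by itself exclude blow-up of $M$, since the equation has a quadratic gain with no loss term.

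Given the uniform bound, the monotone convergence theorem yields a pointwise limit $N_\omega(t)=\lim_k N^{(k)}_\omega(t)$ with $\int_0^\infty N_\omega(t)\,d\omega\le\Phi(t)$. Applying monotone convergence to each integral term passes the iteration to the limit, so $N$ satisfies the integrated form. Every term is finite for a.e.\ $\omega$, because its $\omega$-integral is bounded by $\int_0^t(2\Phi^2+G)$. Absolute continuity in $t$ then shows that $N$ solves \eqref{eq2} for a.e.\ $t$, with nonnegativity inherited from the iterates. This proves the theorem.
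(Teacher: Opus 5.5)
Your local construction is sound. The iterates increase, $M^{(k)}\le\Phi$ follows by induction from $\Phi(t)=M_0+\int_0^t(2\Phi^2+G)$, your condition $2T(M_0+\int_0^T G)<1$ does keep $\Phi$ finite, and monotone convergence passes the iteration to the limit. (You only need $(t,\omega)\mapsto g_\omega(t)$ to be jointly measurable, which is implicit in (A4).)

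The genuine gap is the one you flag: passing to an arbitrary $T$. No argument can close it. (A3)--(A4) place no restriction on $T$, and the statement is false for large $T$. By Tonelli,
\[
\int_0^\infty\!\!\int_0^\infty N_{\omega+\mu}N_\mu\,d\mu\,d\omega=\int_0^\infty N_\mu\int_\mu^\infty N_\nu\,d\nu\,d\mu=\tfrac12 M^2 .
\]
So any solution in your class satisfies $M'=\tfrac32M^2+G\ge\tfrac32M^2$. When $M_0>0$ this gives $1/M(t)\le 1/M_0-\tfrac32 t$. Hence no solution with $M\in L^\infty(0,T)$ exists once $T\ge 2/(3M_0)$; for example, take $g\equiv 0$ and $N_\omega(0)=e^{-\omega}$ on $[0,1]$.

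Your proposed sharpening is still worth doing, but it yields the sharp local result, not global existence. Applied to the iterates, the same identity gives the equality
\[
M^{(k+1)}(t)=M_0+\int_0^tG+\tfrac32\int_0^t\big(M^{(k)}\big)^2 .
\]
So the $M^{(k)}$ are exactly the Picard iterates of $\Phi'=\tfrac32\Phi^2+G$, $\Phi(0)=M_0$. Your construction therefore produces a solution on every $[0,T]$ on which this scalar Riccati solution is finite, and that is the optimal lifespan. Your fallback reading is thus the only one under which the theorem holds.

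The paper's proof does not escape this either, though it takes a different route. It integrates \eqref{eq2} into the Riccati equation \eqref{eq3} for the total mass, with constant $2$ where the correct constant is $\tfrac32$. It then freezes this $N$ in the linear term to get the representation \eqref{eq5}. Sup-in-time majorants are controlled by a generating-function quadratic, giving local uniform convergence of $\int_0^n N_\mu\,d\mu$. It identifies $M=N$ locally via $(M-N)'=(M-N)(M+2N)$, and finally continues to $[0,T]$ in Lemma 5. That last step uses that the Riccati solution $N$ is continuous on all of $[0,T]$, which is exactly what fails past the blow-up time.

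Your monotone iteration is the cleaner local argument, for two reasons. It constructs the solution directly, whereas \eqref{eq5} still contains the unknown $N_\omega$ in the gain term. It also avoids the generating-function step, where the correlation $\int_0^\infty e_{\omega+\mu}e_\mu\,d\mu$ is treated as if its generating function were $E^2$.
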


The proof will be provided in the next section.\\

\textbf{Acknowledgments.} The author is thankful for Minh-Binh Tran's valuable guidance throughout this work. The author also thanks Jennifer Mackenzie for helping to edit this paper.
\end{section}

\begin{section}{Proof of Theorem 1}

We have the following equation
\begin{equation} \label{eq3}
\begin{split}
N' = 2N^2+g(t),
\end{split}
\end{equation}

obtained by integrating \eqref{eq2}  with respect to $\omega$, replacing $\int_0^\infty N_\omega d\omega$ with $N$, and let $g(t) =\int_0^\infty g_\omega(t) d\omega$. We note that~\eqref{eq3} is a Riccati equation with an arbitrary solution $N(t)$. Since $g(t)$ is non-negative, $N'(t)$ is non-negative. Additionally, given $N(t)$ has a non-negative initial value $N(0)$, we have the following remark.

\begin{Remark}
 The solutions to ~\eqref{eq3} are non-negative.
\end{Remark}

Similarly, by replacing $\int_0^\infty N_\mu d\mu$ with $N$ in equation \eqref{eq2} , we have

\begin{equation} \label{eq4}
\begin{split}
  \frac{\partial N_\omega}{\partial t}=  N_\omega N+ \int_0^\infty N_{\omega+\mu} N_\mu d\mu + g_\omega(t).\\
\end{split}
\end{equation}

Since $N$ is non-negative, \eqref{eq4} has a unique solution given by:\\
\begin{equation} \label{eq5}
    \begin{split}
        N_\omega= e^{\psi(t)}N_\omega(a) +e^{\psi(t)}\int_a^te^{-\psi(t)} \left( \int_0^\infty N_{\omega+\mu} N_\mu d\mu +g_w(t)\right) dt,
    \end{split}
\end{equation}

where $\psi(t)=\int_a^tNdt$. We aim to show that $\int_0^\infty N_\mu d\mu$ converges uniformly on $[0,T]$, but first we define the following auxiliary equations:
\[
       \begin{split}
           X(a):=\int_0^\infty N_\omega d\omega (a), \quad      g(t):=\int_0^\infty g_\omega(t)d\omega,
       \end{split}
\]

\[
\begin{split}
    d_\omega (t):= \sup_{a\leq s \leq t}N_\omega(s), \quad  m(t):= \sup_{a\leq s\leq t} e^{\psi(s)}\int_a^s e^{-\psi(\tau)} d\tau, \text{ and}
\end{split}
\]

\[
p_\omega(t) :=\sup_{a\leq s \leq t}N_\omega(a)e^{\psi(s)}+e^{\psi(s)}\int_a^s e^{-\psi(\tau)}g_\omega(\tau)d\tau.
\]

We prove the following lemma.

\begin{Lemma}
 Assuming conditions \emph{(A1)}--\emph{(A4)} hold, then for every $a \ge 0$, there exists $b>a$ such that
\[
f_n=\int_0^n N_\mu \, d\mu
\]
converges uniformly on the interval $[a,b]$.
\end{Lemma}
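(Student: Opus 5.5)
We work with the mild (Duhamel) form \eqref{eq5} of \eqref{eq4} on a short interval $[a,b]$. The goal is a bound of the form
\[
0\le N_\mu(t)\le h_\mu \quad \text{for all } t\in[a,b],
\]
with $h_\mu$ independent of $t$ and $\int_0^\infty h_\mu\,d\mu<\infty$. Once this is available, the Weierstrass $M$-test gives
\[
\sup_{t\in[a,b]}\Big|\int_n^\infty N_\mu(t)\,d\mu\Big|\le \int_n^\infty h_\mu\,d\mu\to 0,
\]
so $f_n$ converges uniformly on $[a,b]$. The natural candidate for the dominating function is built from $d_\mu(b)$, $p_\mu(b)$ and $m(b)$ introduced above.

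\textbf{Step 1 (choice of $b$ via the Riccati equation).} By the Remark, $N\ge 0$. Since $g\in L^1_{\mathrm{loc}}$ and $X(a)<\infty$, the Riccati equation \eqref{eq3} has a solution bounded on some $[a,b_0]$, because its blow-up time is bounded below in terms of $X(a)$ and $\int_a^{\cdot} g$. On this interval $\psi$ is bounded, say $0\le \psi\le \Psi$, and $m(t)\le e^{\Psi}(t-a)$. We then shrink $b\le b_0$ so that $m(b)\,\sup_{[a,b]}N$ is smaller than a fixed constant such as $1/2$.

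\textbf{Step 2 (pointwise bound from the Duhamel formula).} Take the supremum over $s\in[a,t]$ in \eqref{eq5}. Bounding the kernel term by $\int_0^\infty N_{\omega+\mu}N_\mu\,d\mu\le d_\omega^*(t)\,N$, where $d_\omega^*$ is the supremum of $N_\nu$ over $\nu\ge\omega$, yields
\[
d_\omega(t)\le p_\omega(t)+m(t)\,\sup_{[a,t]}N\cdot d^*_\omega(t).
\]
Since $p_\omega(t)$ is nonincreasing-free, we pass to the running tail supremum in $\omega$ and absorb the last term using Step 1, obtaining $d_\omega(b)\le 2\sup_{\nu\ge\omega}p_\nu(b)$.

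A cleaner alternative is to skip tail suprema and integrate in $\omega$ directly:
\[
\int_0^\infty d_\omega(b)\,d\omega\le \int_0^\infty p_\omega(b)\,d\omega + m(b)\sup_{[a,b]}N\int_0^\infty d_\mu(b)\,d\mu,
\]
which uses Fubini and $\int_0^\infty N_{\omega+\mu}\,d\omega\le \int_0^\infty d_\nu\,d\nu$. Here $\int_0^\infty p_\omega(b)\,d\omega\le e^{\Psi}\bigl(X(a)+\int_a^b g\bigr)<\infty$ by (A3) and (A4). This gives $h_\omega=d_\omega(b)$ with finite integral.

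\textbf{Main obstacle.} Step 2 is circular: it presupposes $\int_0^\infty d_\omega\,d\omega<\infty$ before this quantity can be absorbed. To make it rigorous, I would run the argument on a truncated problem (integrate over $\omega<R$, or construct $N$ by Picard iteration of \eqref{eq5} with $N$ frozen as the Riccati solution). Each iterate then satisfies the a priori bound with constants uniform in the iteration, and the bound persists in the limit by monotone convergence, using nonnegativity from (A2). Consistency between $\int_0^\infty N_\omega\,d\omega$ and the Riccati solution $N$ is a further point that I would verify by integrating \eqref{eq5}.
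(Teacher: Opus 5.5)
Your endgame, a $t$-independent integrable majorant on $[a,b]$ followed by the Weierstrass $M$-test, is the same as the paper's. The gap is in how you close the estimate in Step 2, and it goes beyond the circularity you flag.

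Both of your routes bound the gain term using $\int_0^\infty N_\mu(\tau)\,d\mu\le N(\tau)$. That is, they replace the unknown mass $M(\tau)=\int_0^\infty N_\mu(\tau)\,d\mu$ by the Riccati solution. This is exactly the consistency $M\le N$. The paper obtains it only in Lemmas 4--5, and there it is derived \emph{from} the present lemma: finiteness and continuity of $M$ are needed before one can integrate \eqref{eq5} in $\omega$. So consistency is an input to your absorption step, not a check to be done afterwards. The paper never linearizes through $N$; it keeps the quadratic term and closes with a quadratic relation ($E=P+mE^2$ with $4mP\le 1$).

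Two of your other devices also fail as stated:
\begin{enumerate}
\item The tail-supremum route cannot produce the majorant, since $\sup_{\nu\ge\omega}p_\nu$ need not be integrable even when $p$ is.
\item Truncating only the $\omega$-integral to $\omega<R$ does not close. The gain term at $\omega<R$ involves $N_\nu$ for all $\nu>\omega$, so you would have to truncate the equation itself.
\end{enumerate}

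Your Picard iteration is the right vehicle. Start from $N^{(0)}=0$; the iterates are monotone in $k$ by (A2). You must then close it in one of two ways.

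\textbf{Keep the quadratic.} By Tonelli, $\int_0^\infty\int_0^\infty N_{\omega+\mu}N_\mu\,d\mu\,d\omega=\tfrac12 M^2$. Hence $A_k=\sup_{[a,b]}M_k$ satisfies $A_{k+1}\le C_0+\tfrac12 e^{\Psi}(b-a)A_k^2$, where $C_0=e^{\Psi}\bigl(X(a)+\int_a^b g\bigr)$. The bound $A_k\le 2C_0$ then propagates once $e^{\Psi}(b-a)C_0\le\tfrac12$. This scalar inequality is the correct form of the paper's step. The identity $E=P+mE^2$ is not literally true: the gain term is a correlation, not a convolution, and at $z=1$ it contributes $\tfrac12E^2$.

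\textbf{Make $M_k\le N$ part of the induction hypothesis.} Integrating the iteration gives $M_{k+1}\le Y$, where $Y'=NY+\tfrac12N^2+g$ and $Y(a)=X(a)$. Meanwhile $N'=N\cdot N+N^2+g$, so $N-Y\ge 0$ provided $X(a)\le N(a)$.

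In either case you never need $\int_0^\infty d_\omega\,d\omega<\infty$. Once $A=\sup_{[a,b]}M<\infty$, \eqref{eq5} with $0\le\psi\le\Psi$ gives directly $N_\omega(t)\le h_\omega:=e^{\Psi}\bigl(N_\omega(a)+\int_a^b[\int_0^\infty N_{\omega+\mu}N_\mu\,d\mu+g_\omega]\,d\tau\bigr)$ for all $t\in[a,b]$. Moreover $\int_0^\infty h_\omega\,d\omega\le C_0+\tfrac12 e^{\Psi}(b-a)A^2$.

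Note that this proves the lemma for the solution you construct, which is the minimal one. Applying it to an arbitrary solution of \eqref{eq5} would require the uniqueness that the paper asserts but does not prove.
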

\begin{proof}

We consider $p_\omega(t)$. Since $\psi(s)$ is positive, non-decreasing, and both equations $N_\omega(a)$ and $g_\omega(\tau)$ are positive, we can replace the $e^{\psi(s)} \text{ by } e^{\psi(t)} $ and $  e^{-\psi(s)}$ by $1$. Hence,
\[
\begin{split} 
    p_\omega(t) \leq N_\omega(a)e^{\psi(t)}+e^{\psi(t)}\int_a^t g_\omega(\tau)d\tau.
\end{split}
\]

Hence, taking the integral on both sides with respect to $\omega$ yields\\
\[
\begin{split} 
   \int_0^\infty p_\omega(t) d\omega \leq & e^{\psi(t)} \int _0^\infty N_\omega d\omega (a)+e^{\psi(t)}\int_a^t \int_0^\infty g_\omega(\tau) d\omega d\tau\\
   =& X(a)e^{\psi(t)}+e^{\psi(t)}\int_a^t g(\tau) d\tau < \infty  .\\
\end{split}
\]

Note that the equation \eqref{eq5}  is bounded by\\
\[
\begin{split}
    p_\omega (t) +\sup_{a\leq s\leq t} e^{\psi(s)} \int_a^se^{-\psi(\tau)} \int_0^\infty N_{\mu+\omega}N_\mu d\mu d\tau.
\end{split}
\]

Taking the supremum over $t$ gives\\
\[
\begin{split}
   d_\omega(t) \leq p_\omega (t) + m(t) \int_0^\infty d_{\mu+\omega}d_\mu d\mu.
\end{split}
\]

Next, we define a family of functions $e_\omega(t)$ such that $e_0(t)=d_0(t)$ and\\
\[
\begin{split}
    e_\omega(t)  = p_\omega(t) + m(t) \int_0^\infty e_{\mu+\omega}(t)e_\mu (t)d\mu.
\end{split}
\]

We have the following generating functions\\
\[
\begin{split}
    E(t; z) & = \int_0^\infty e_\omega (t)  z^\omega  d\omega, \text{ and }\\
    P(t,z)&= \int_0^\infty p_\omega (t) z^\omega  d\omega  .\\
\end{split}
\]
Therefore,
\[
\begin{split}
E(t,z)& = P(t;z) + m E(t,z)^2.
\end{split}
\]

Thus,\\
\[
\begin{split}
    E (t,z) = \frac{1\pm\sqrt{1-4Pm}}{2m}.
\end{split}
\]

We note that $m(t) \rightarrow 0$ as $t \rightarrow a^+$. Furthermore, since $\int_0^\infty p_\omega(t)d\omega$ is finite, there is $b$ close to $a$ such that $\int_0^\infty p_\omega(t)d\omega \leq \frac{1}{4m(t)}$ for all $t\in [a, b]$ . Then, if $|z| \leq 1$, we have:\\
$$ |4m(t)P(t,z)| \leq 4m(t) \int_0^\infty p_\omega(t) (|z|^\omega)d\omega \leq  4m(t) \int_0^\infty p_\omega(t)d\omega \leq 1.$$\\

Hence, $E(t,1)$ admits real solutions for all $t\in [a, b]$. That is:
\[
\int_0^\infty e_\omega(t) d\omega<\infty, \text{ for all }t\in[a,b].
\]
We also have 
$$\int_0^ \tau N_\omega d\omega \leq \int_0^\tau d_\omega d\omega \leq \int_0^\infty e_\omega d\omega  \leq  \infty, \text{ for all $\tau\in \mathbb{R}$}. $$\\
Let $I_i = [i-1, i) \text{ for } i\in \mathbb{N}$. We define a sequence of function $\{\int_{I_i}N_\omega\}_i $ and $\{\int_{I_i}p_\omega\}_i$. Hence, for each $i\in I$, we have\\
$$ \int_{I_i}N_\omega\ d\omega \leq \int_{I_i}p_\omega d\omega\ \leq \int_{I_i} e_\omega d\omega (b).$$\\
Therefore, by the Weierstrass M-test, $\int_{0}^\infty N_\omega d\omega$ converges uniformly.
\end{proof}

   From this lemma, we know that $\int _0^\infty N_\omega d\omega$ converges uniformly in $[0,b]$ for some $b > 0$. We let $M(t) = \int _0^\infty N_\omega d \omega$. We obtain the following result.\\

\begin{Lemma}
   Assuming conditions \emph{(A1)}--\emph{(A4)} hold. Then there exists $b>0$ such that
\[
M(t)=N(t), \quad \text{for all } t\in[0,b].
\]
\end{Lemma}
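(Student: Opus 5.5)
The plan is to show that $M(t)=\int_0^\infty N_\omega\,d\omega$ solves the same initial value problem as $N$, namely \eqref{eq3} with $M(0)=X(0)=N(0)$, and then to invoke uniqueness for \eqref{eq3}. First I apply Lemma 2 with $a=0$. This gives $b>0$ such that $f_n(t)=\int_0^n N_\mu\,d\mu$ converges uniformly on $[0,b]$ to $M(t)$. In particular $M$ is finite and continuous there, since each $f_n$ is continuous in $t$ by the representation \eqref{eq5}. In addition, the majorant $\int_0^\infty e_\omega(t)\,d\omega<\infty$ from the proof of Lemma 2 dominates $N_\omega(t)$ (through $d_\omega\le e_\omega$) uniformly for $t\in[0,b]$.

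Next I integrate \eqref{eq2} in time from $0$ to $t$ and then over $\omega\in[0,n]$:
\[
f_n(t)-f_n(0)=\int_0^t\Big(N(s)f_n(s)+\int_0^n\!\!\int_0^\infty N_{\omega+\mu}N_\mu\,d\mu\,d\omega+\int_0^n g_\omega(s)\,d\omega\Big)ds .
\]
Here $N$ denotes $\int_0^\infty N_\mu\,d\mu$ as it appears in \eqref{eq2}, which is $M$ itself. I then let $n\to\infty$. The first term converges by uniform convergence. The source term converges by monotone convergence together with (A4). For the quadratic term, Tonelli and the substitution $\nu=\omega+\mu$ give
\[
\int_0^\infty\!\!\int_0^\infty N_{\omega+\mu}N_\mu\,d\mu\,d\omega=\int_0^\infty N_\mu\int_\mu^\infty N_\nu\,d\nu\,d\mu=\tfrac12 M^2 .
\]
The term $N_\omega\int N_\mu$ contributes $M^2$. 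The result is the integral identity
\[
M(t)=M(0)+\int_0^t\big(c\,M(s)^2+g(s)\big)ds,
\]
where $c$ is the constant coming from the computation. I must check that $c$ matches the coefficient $2$ in \eqref{eq3}, or else read \eqref{eq3} as having been derived by exactly this same integration, in which case the coefficients agree by construction. Since the right-hand side is absolutely continuous, $M$ is absolutely continuous and satisfies \eqref{eq3} almost everywhere, with $M(0)=N(0)$.

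Finally, the right-hand side $cN^2+g(t)$ of \eqref{eq3} is locally Lipschitz in $N$ and $L^1_{\mathrm{loc}}$ in $t$ by (A4). Carathéodory uniqueness therefore applies. Both $M$ and $N$ are bounded on $[0,b]$: $M$ by the uniform bound from Lemma 2, and $N$ after shrinking $b$ below its blow-up time, which is possible because Remark 3 gives nonnegativity and the Riccati solution exists locally. Gronwall's inequality applied to $|M-N|$ then gives $M\equiv N$ on $[0,b]$.

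The main obstacle is justifying the limit interchange in the quadratic term. Tonelli needs only nonnegativity, which holds by (A2) and the positivity of the representation \eqref{eq5}. Passing the limit $n\to\infty$ inside the time integral, however, needs a time-uniform integrable bound. My first attempt is to use $\int_0^\infty e_\omega(t)\,d\omega\le \int_0^\infty e_\omega(b)\,d\omega$, which follows from the monotonicity of $e_\omega$ in $t$, as the dominating constant and then apply dominated convergence in $s$.
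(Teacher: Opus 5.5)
There is a genuine gap. It sits in the sentence ``Here $N$ denotes $\int_0^\infty N_\mu\,d\mu$ as it appears in \eqref{eq2}, which is $M$ itself.''

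The $N_\omega$ in this lemma are the functions given by \eqref{eq5}. You rely on this yourself: continuity of $f_n$ comes from \eqref{eq5}, and the majorant $e_\omega$ comes from the uniform-convergence lemma. These $N_\omega$ solve \eqref{eq4}, not \eqref{eq2}. The factor multiplying $N_\omega$ is the prescribed Riccati solution $N(t)$ of \eqref{eq3}, which is why $\psi'=N$ in \eqref{eq5}. Your displayed integrated identity is correct if $N(s)$ is read that way. Replacing it by $M(s)$ assumes exactly the conclusion $M=N$. Put differently, you take for granted that $N_\omega$ already solves \eqref{eq2}, which is what Theorem 1 is meant to establish, so a lemma proved this way cannot feed the existence argument.

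Your route also hits the coefficient problem you flagged. The equations $M'=\tfrac32M^2+g$ and $N'=2N^2+g$ with equal initial data have different solutions (already for $g\equiv0$). So the uniqueness step cannot give $M=N$.

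The missing idea is to keep the coefficient as the given function $N(t)$. Integrate \eqref{eq5} over $\omega\in[0,n]$ and let $n\to\infty$ using the dominations you describe; this gives $M'=NM+\tfrac12M^2+g$. Your Tonelli value $\tfrac12M^2$ is correct. Integrating \eqref{eq2} actually gives $N'=\tfrac32N^2+g$. Comparing the two yields $(M-N)'=\tfrac12(M-N)(M+3N)$, which is linear in $M-N$ with a coefficient bounded on $[0,b]$. Then $M(0)=N(0)$ and Gronwall's inequality force $M\equiv N$.

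This is the paper's argument. It writes the quadratic term as $M^2$, which matches the coefficient $2$ in \eqref{eq3} (the same slip appears in both places), and obtains $(M-N)'=(M-N)(M+2N)$. With \eqref{eq3} exactly as printed and the correct constant, $M=N$ would force $\tfrac12N^2=0$. So your instinct to re-derive \eqref{eq3} is needed, but it closes the argument only when combined with the correct equation \eqref{eq4} for $N_\omega$.
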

\begin{proof}
    Let $M_n(t) = \int_0^n N_\omega d\omega$ and $G_n(t) = \int_0^n g_\omega d\omega$.\\
    
    By integrating \eqref{eq5} with respect to $\omega$, we have\\
    $$M_n(t) =e^{\psi(t)}\int_0^nN_\omega (0) d\omega + e^{\psi(t)}\int_0^t e^{-\psi(t)}\int_0^n\int_0^\infty N_{\omega+\mu}N_\mu d\mu d\omega dt +e^{\psi(t)} \int_0^t e^{-\psi(t)}\int_0^n g_\omega(t) d\omega dt.$$\\

    Now, as $\int_0^nN_\omega d\omega$ is continuous with respect to $t$ and converges uniformly to $M(t)= \int_0^\infty N_\omega d\omega$, the Uniform Limit Theorem tells us that $M(t)$ is continuous with respect to $t$. In addition, we have\\
    \[
    \begin{split}
         \int_0^\infty\int_0^n  N_{\omega+\mu}d\mu N_\omega d\omega &= \int_0^\infty\int_\omega^{n+\omega}  N_t dt N_\omega d\omega\\
         & \leq  M^2(t).
    \end{split}
    \]

On the other hand, $$e^{-\psi(t)}\int_0^ng_\omega(t) d\omega \le \int_0^ng_\omega(t) d\omega \leq g(t).$$
Therefore, by the Dominated Convergence Theorem,\\
$$M(t) =e^{\psi(t)} \left( N (0) + \int_0^t e^{-\psi(\tau)}\int_0^\infty\int_0^\infty N_{\omega+\mu}N_\mu d\mu d\omega dt +\int_0^t e^{-\psi(\tau)} g(\tau)  dt\right).$$\\
By taking the derivative of $M(t)$ with respect to $t$, we have\\
\[
\begin{split}
    M'(t)&=N(t)M(t)+M^2+g(t).
\end{split}
\]

Let $g(t)=N'-2N^2$ as in \eqref{eq3}. Then

\[
M'(t)=N(t)M(t)+ M^2(t)+N'-2N^2.
\]

Hence,
\[
(M-N)'(t) = (M-N)(M+2N).
\]

With the initial condition $M(0)=N(0)$, we conclude that $M(t)=N(t)$ on the interval $[0,b].$
\end{proof}

In the next Lemma, we extend the result proved in \textbf{Lemma 4} to $[0,T]$.
\begin{Lemma}
     If conditions \emph{(A1)}--\emph{(A4)} hold, then $M(t)= N(t)$ on $[0, T]$.
\end{Lemma}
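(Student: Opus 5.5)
The plan is a continuation argument: repeat the local step of Lemma 3 and Lemma 4 with the initial time $0$ replaced by an arbitrary starting time $a$. Set
\[
t^* := \sup\{\, s\in[0,T] : M(t)=N(t) \text{ for all } t\in[0,s]\,\}.
\]
By Lemma 4, $t^*\ge b>0$. I will show that $t^*=T$ and that the supremum is attained, arguing by contradiction and assuming $t^*<T$.

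\textbf{Step 1: equality persists at $t^*$.} On $[0,t^*)$ we have $M=N$. The function $N$ solves the Riccati equation \eqref{eq3}. I will take $T$ smaller than the blow-up time of \eqref{eq3}, since otherwise the statement cannot hold. Under that restriction $N$ is continuous and bounded on $[0,T]$. Hence $M(t)=N(t)\le \sup_{[0,T]}N<\infty$ on $[0,t^*)$.

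\textbf{Step 2: equality at the endpoint.} I need $M(t^*)=N(t^*)$. Lower semicontinuity follows from Fatou: $M(t^*)\le \liminf_{t\to t^{*-}} M(t)=N(t^*)$. The reverse inequality comes from the representation formula \eqref{eq5} started at $a=0$ and integrated in $\omega$, exactly as in the proof of Lemma 4. Each term in that formula is monotone or dominated. In particular, the quadratic term is controlled by $M^2\le N^2$ on $[0,t^*)$, and $g$ lies in $L^1$ by (A4). Dominated convergence then makes $M$ left-continuous at $t^*$, so $M(t^*)=N(t^*)<\infty$.

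\textbf{Step 3: restart beyond $t^*$.} Take $a=t^*$. Then (A3) holds at the new initial time, since $X(a)=M(t^*)<\infty$, and (A1), (A2), (A4) are unchanged on $[a,T]$. Lemma 3 is stated for every $a\ge 0$, so it gives some $b'>t^*$ on which $\int_0^n N_\mu\,d\mu$ converges uniformly. Rerunning the proof of Lemma 4 on $[t^*,b']$ gives
\[
(M-N)'=(M-N)(M+2N), \qquad (M-N)(t^*)=0.
\]
Since $M+2N$ is continuous, hence bounded, on $[t^*,b']$, Gronwall's inequality (uniqueness for this linear equation in $M-N$) gives $M=N$ on $[t^*,b']$. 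This contradicts the definition of $t^*$. So $t^*=T$, and Step 2 applied at $T$ gives equality on the closed interval $[0,T]$.

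\textbf{Main obstacle.} The delicate point is that the length $b'-t^*$ produced by Lemma 3 is not quantified. The argument above avoids needing a uniform step size by using the supremum $t^*$. It does, however, require Step 2, the passage to the limit at $t^*$, which rests on a priori bounds. If only a step-by-step argument were available, I would instead make the step uniform. The choice of $b$ in Lemma 3 depends only on $m(t)$ and $\int p_\omega\,d\omega$. Both are bounded by $e^{\psi(t)}$, with $\psi\le\int_0^T N$, together with $\sup N$ and $\int_0^T g$. So a uniform $\delta>0$ with $b'-a\ge\delta$ should exist, and finitely many steps would then cover $[0,T]$.
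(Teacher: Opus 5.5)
Your proposal is correct and follows the same continuation argument as the paper. You take the supremum of the set where $M=N$, pass to the limit at that supremum, and restart Lemmas 3--4 there to reach a contradiction. Your version is more careful in three places: the paper's monotone step ($M_n\le N$ on $[0,b)$) only gives $M(b)\le N(b)$ and then asserts equality, whereas your integrated representation formula supplies the reverse inequality; you check that $X(t^*)<\infty$ so Lemma 3 can be restarted; and you make explicit that $T$ must lie below the blow-up time of the Riccati equation \eqref{eq3}.
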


\begin{proof}
    Let $B=\{\beta \in [0,T] \ | \ M(t)=N(t) \text{ for all } t\in [0, \beta] \}$, and let $b=\sup B$. We note that $b\not \in B$. Since if $b\in (0,T]$, by \textbf{Lemma 4}, there is some $c >b$ such that $M(t)=N(t) \text{ on } [b,c]$. This implies that $M(t)=N(t) \text{ on } [0,c]$ which contradicts the definition of $b$.\\

     There are two cases, $b<T \text{ or } b=T$.\\
   
    If $b<T$, we have $\{M_n(t)\}_{n=1}^\infty$ is an increasing sequence satisfying:
   \[
   M_n(t)\leq M(t)= N(t), \ \text{ for all } t\in [0,b)
   .\]
Thus, because $M_n(t)$ and $N(t)$ is continuous on $[0,T]$, we obtain
\[
M_n(b)\leq N(b).
\]
  By taking the limit with respect to $n$ on both sides, we arrive at $M(b)= N(b)$, which implies $b\in B$. This is a contradiction to what was shown earlier.
   
   Therefore, $b=T$.\\
\end{proof}

\textbf{Proof of Theorem 1.} It is clear that \eqref{eq4}  is the solution for \eqref{eq1}, since $\int_0^\infty N_\omega d\omega$ converges to $N(t)$ on $[0,T]$. We have established a solution for the third term of the Connaugton-Newell Model with a source term. \hfill\qedsymbol
\end{section}
\def\cprime{$'$} \def\cprime{$'$}
\providecommand{\bysame}{\leavevmode\hbox to3em{\hrulefill}\thinspace}
\providecommand{\MR}{\relax\ifhmode\unskip\space\fi MR }
\providecommand{\MRhref}[2]{%
  \href{http://www.ams.org/mathscinet-getitem?mr=#1}{#2}
}
\providecommand{\href}[2]{#2}

\end{document}